\newtheorem{thm}{Theorem}[section]
\newtheorem{prop}[thm]{Proposition}
\theoremstyle{remark}
\theoremstyle{definition}
\newtheorem*{LY Theorem}{Lee-Yang Theorem}
\numberwithin{equation}{section}
\numberwithin{figure}{section}
\font\nt=cmr7
\def\note#1
\newcommand{\ra}{\rightarrow}
\renewcommand{\mod}{\operatorname{mod}}
\newcommand{\tl}{\tilde}
\newcommand{\Hom}{\operatorname{Hom}}
\newcommand{\id}{\operatorname{id}}
\newcommand{\isom}{\approx}
\newcommand{\eps}{{\varepsilon}}
\newcommand{\si}{{\sigma}}
\newcommand{\Si}{{\Sigma}}
\newcommand{\BB}{{\mathcal B}}
\newcommand{\CC}{{\mathcal C}}
\newcommand{\KK}{{\mathcal K}}
\newcommand{\SSS}{{\mathcal S}}
\newcommand{\VV}{{\mathcal V}}
\newcommand{\XX}{{\mathcal X}}
\newcommand{\YY}{{\mathcal Y}}
\newcommand{\N}{{\Bbb N}}
\newcommand{\Z}{{\Bbb Z}}
\def\B0{{\mathbf{0}}}
\def\Empty{}
\newcommand\oplabel[1]{
  \def\OpArg{#1} \ifx \OpArg\Empty {} \else
  	\label{#1}
  \fi}
\newcommand{\comm}[1]{}
\newcommand{\comment}[1]{}
\begin{document}

\bigskip\bigskip

\title[On cycles and coverings associated to a knot]
{On cycles and coverings associated to a knot}


\author {Lilya Lyubich and Mikhail Lyubich.}
\date{\today}

\begin{abstract}  
Let $ \mathcal{K} $ be a knot, $G$ be the knot group, $K$ be its commutator 
subgroup,  
and $x$ be a distinguished meridian.
Let $\Sigma $ be a finite abelian group.
The dynamical system introduced by D. Silver and S. Williams in [S],[SW1] 
consisting of the set 
$\Hom (K, \Sigma )$ of all representations $\rho : K \rightarrow \Sigma$
endowed with the weak topology, together with the homeomorhpism
$$
\sigma _x : \Hom (K, \Sigma) \longrightarrow \Hom (K,\Sigma); \;
\sigma _x\rho (a)=\rho (xax^{-1}) \; \forall a \in K, \rho \in \Hom (K, \Sigma)
$$
is finite, i.e. it consists of several cycles.
In \cite{L} we found the lengths of these cycles   
for $ \Sigma = \Z /p, \; p $ is prime, in terms of the roots of the Alexander
polynomial of the knot, $\mod p.$
In this paper we generalize this result to a general abelian group $ \Sigma $. 
This gives a complete classification of depth 2 solvable coverings 
over $ S^3 \backslash  \mathcal{K} $.

\end{abstract}  

\setcounter{tocdepth}{1}
 
\maketitle
\tableofcontents

\section{Introduction}
Let $ \KK $ be a knot, $X$ be the knot complement in $S^3$ ,   
$ X= S^3 \setminus \KK $,  $ X_\infty $  be the infinite cyclic cover of $ X $, 
and  $ X_d $ be the cyclic cover of $X$ of degree $d$.

Let $G$ be the knot group, $K$ be its commutator subgroup, and  
$\Sigma $ be a finite group. Let $x$ be a distinguished meridian 
of the knot.
The  dynamical system introduced by D. Silver and S. Williams 
in \cite{Si} and \cite{SW 1} consisting of the set 
$\Hom (K, \Sigma )$ of all representations $\rho : K \rightarrow \Sigma$
endowed with the weak topology, together with the homeomorhpism
$\sigma _x $ (the shift map):
$$
\sigma _x : \Hom (K, \Sigma) \longrightarrow \Hom (K,\Sigma); \;
\sigma _x\rho (a)=\rho (xax^{-1}) \; \forall a \in K, \rho \in \Hom (K, \Sigma).
$$
is a shift of finite type (\cite{SW 1}). 
Moreover, if $ \Sigma $ is  abelian, 
this  dynamical system  is finite, i.e. it  consists of several cycles 
(\cite{SW 2},\cite{K}).  
In (\cite{L}) we calculated the lengths of these cycles and their lcm 
(least common multiple) for $\Sigma =\Z /p $, $ p $ prime, 
in terms of the roots of the Alexander polynomial of 
the knot, $ \mod p $.
Our goal is to generalize these results to an arbitrary finite abelian group 
$\Sigma $. This gives a complete classification of 
solvable depth  2  coverings of
$  S^3 \setminus \KK $. (By a solvable covering of depth $n$  we mean 
a composition of $n$ regular coverings $M_0 \ra M_1 \ra \ldots \ra M_n $
with corresponding groups $\Gamma _i $, such that 
$ \Gamma _0 \lhd \Gamma _1 \lhd \ldots \lhd \Gamma _n $ and
$\Gamma _{i+1}/\Gamma _i $ is abelian.)

Let $ \Delta (t) =c_0+c_1(t)+ \ldots + c_nt^n $ 
be the Alexander polynomial of the knot $ \KK $, and $ B-tA $ its Alexander
matrix of size, say, $ m \times m$, corresponding to the Wirtinger presentation.
From \cite{L} we know that 
\begin{equation}
\Hom (K,\Z /p) \cong (\Z /p)^n \;\; \text{ where }
n=\deg (\Delta (t)\; \mod p).
\end{equation}
It turns out that the same result is true for a target group $ \Z /p^r $:
\begin{equation}\label{MR}
\Hom (K,\Z /p^r) \cong (\Z /p^r)^n \;\; \text{ where }
n=\deg (\Delta (t)\; \mod p).
\end{equation}
In section 2 we give a proof of \eqref{MR} for two-bridge knots. 
In section 3 we prove a general result
about solutions of the reccurence equation
\begin{equation}\label{RE}
  Bx_j-Ax_{j+1}=0,
\end{equation}
where $ x_i \in \XX ,\;  \XX \text{ and }\YY $ are  finite modules, and
$A,\:B:\XX  \ra \YY $ are module homomorphisms.
We then use this result in section 4 to prove  \eqref{MR}
for an arbitrary knot. In section 5 we describe the set of periods and 
calculate their lcm for target group $\Sigma = \Z /p^r $, based on similar 
results for the target group $ \Z /p $, obtained in \cite{L}. 
We then generalize these results for any finite abelian group $\Sigma $.

In section 6 we describe the relation between 
the shift $\sigma _x $ on $\Hom (K,\Sigma ) $ and the pullback map $\tau ^* $ 
 corresponding to the meridian 
$ x $,
on the space of regular coverings over $ X_\infty $ . 
In section 7 we construct a regular covering $ p: N \ra X_d $
with the group of deck transformations $\Sigma $, corresponding to a surjective 
homomorphism $ \rho \in \Hom(K,\Sigma )$ with $\sigma ^d_x \rho =\rho $,
and prove that any regular covering of $ X_d $ with the group of deck 
transformations $ \Sigma $ can be obtained in this way. We conclude the paper 
by formulating our results in terms of $p-$adic representations of $K$ 
and associated 
solenoids and flat principal bundles.

\section{Case of a two-bridge knot}
Let $\Delta (t) $ be the Alexander polynomial of a two-bridge knot 
$\mathcal {K} $ and 
$ n $ be the degree of $\Delta (t)\; \mod p.$ 
Since the Alexander polynomial 
is defined up to multiplication by $ t^k, k \in \Z ,$ and has symmetric 
coeffitients, we can write
$$
\Delta (t)=pd_kt^{-k}+\ldots +pd_1t^{-1} +c_0+c_1t+ \ldots +c_nt^n+pd_1t^{n+1}+
\ldots pd_kt^{n+k}t^{n+k}, 
$$ 
where $ c_i, d_i $ are integers and $ c_0=c_n $ 
is not divisible by $ p.$
Similarly to the Theorem 9.1 in \cite{L} we can prove that
$ \Hom (K,\Z /p^r) $ is isomorphic to the space of bi-infinite sequences
$ \{x_i \}_{ i \in \Z} ,\; x_i \in \Z /p^r, $ satisfiing  the following 
reccurence equation 
$ \mod p^r $:
\begin{equation}\label{eq1}
pd_kx_{-k+j}+\ldots pd_1x_{-1+j} +c_0x_j+c_1x_{j+1}+ \ldots +c_nx_{n+j}+
\end{equation}
\begin{equation*}
 +pd_1x_{n+1+j}+ \ldots +pd_kx_{n+k+j}=0
\end{equation*}
From \cite{L} we know that 
$\Hom (K,\Z /p) \cong (\Z /p)^n $ where $ n=\deg ( \Delta(t) \; 
\mod p) $. The same is true for target groups $ \Z /p^r $.
\begin{thm}\label{thm1}
$\Hom (K,\Z /p^r) \cong (\Z /p^r)^n $ where $ n=\deg ( \Delta(t) \; 
\mod p).$
\end{thm}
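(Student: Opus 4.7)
The plan is to argue by induction on $r$, with the base case $r = 1$ supplied by \cite{L}. Let $M_r \subset (\Z/p^r)^{\Z}$ denote the subgroup of bi-infinite sequences satisfying the recurrence \eqref{eq1}; the identification recalled before the theorem gives $M_r \cong \Hom(K, \Z/p^r)$ as $\Z/p^r$-modules. The key object is the coordinate-wise reduction map $\pi_r : M_r \to M_{r-1}$.

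First I would compute $\ker \pi_r$. A sequence in the kernel has $x_j = p^{r-1} y_j$ with $y_j \in \Z/p$, and when substituted into \eqref{eq1} every outer term $pd_l \cdot p^{r-1} y_{j+\ast}$ acquires a factor $p^r$ and therefore vanishes. What survives is exactly the mod-$p$ recurrence
\begin{equation*}
c_0 y_j + c_1 y_{j+1} + \ldots + c_n y_{j+n} \equiv 0 \pmod{p},
\end{equation*}
so $\ker \pi_r \cong M_1 \cong (\Z/p)^n$. The same calculation identifies the $p$-torsion subgroup $M_r[p]$ with this kernel, so $|M_r[p]| = p^n$.

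The heart of the argument is surjectivity of $\pi_r$. Given $x' \in M_{r-1}$, choose any lift $\tilde x_j \in \Z/p^r$ of each $x'_j$; the residue $R_j$ of the left-hand side of \eqref{eq1} evaluated on $(\tilde x_j)$ is divisible by $p^{r-1}$, hence $R_j = p^{r-1} \bar r_j$ with $\bar r_j \in \Z/p$ well defined. Seeking the correction as $x_j = \tilde x_j + p^{r-1} u_j$, the $pd_l$ terms again drop out modulo $p^r$ and the condition $x \in M_r$ reduces to the inhomogeneous mod-$p$ recurrence
\begin{equation*}
c_0 u_j + c_1 u_{j+1} + \ldots + c_n u_{j+n} \equiv -\bar r_j \pmod{p}.
\end{equation*}
Since $c_0 \equiv c_n \not\equiv 0 \pmod{p}$, this recurrence can be solved in both directions from any prescribed $u_0, \ldots, u_{n-1}$, producing the required lift.

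Combining the two steps inductively yields $|M_r| = |\ker \pi_r| \cdot |M_{r-1}| = p^{rn}$. Because $M_r$ is a $\Z/p^r$-module of order $p^{rn}$ whose $p$-torsion has order $p^n$, the structure theorem for finite abelian $p$-groups forces $M_r \cong (\Z/p^r)^n$. I expect the only real obstacle to be the Hensel-type lifting step above, whose feasibility rests entirely on the fact that the two extremal coefficients $c_0$ and $c_n$ of $\Delta(t)$ are units modulo $p$.
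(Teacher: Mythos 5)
Your proof is correct, and it reaches the conclusion by a genuinely different route than the paper. The paper also inducts on $r$, but it works directly with the initial--value map $\{x_i\}_{i\in\Z}\mapsto(x_0,\dots,x_{n-1})$: assuming the sequence is determined $\bmod\ p^r$ by its first $n$ entries, it observes that in \eqref{eq1} every term other than $c_nx_n$ is then known $\bmod\ p^{r+1}$ (the outer terms because they carry an explicit factor of $p$ and their arguments are known $\bmod\ p^r$, the inner ones because the initial data is prescribed $\bmod\ p^{r+1}$), so $x_n$ --- and, iterating forward and backward, the whole sequence --- is forced $\bmod\ p^{r+1}$. This produces the isomorphism onto $(\Z/p^r)^n$ explicitly, with the first $n$ coordinates as a complete set of invariants. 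You instead analyze the reduction map $\pi_r:M_r\to M_{r-1}$, identify its kernel with the $p$-torsion and with $M_1$, prove surjectivity by a Hensel-type correction, and then invoke the structure theorem for finite abelian $p$-groups to pin down the isomorphism type from the order $p^{rn}$ and the torsion count. Both arguments rest on exactly the same two arithmetic facts --- $c_0\equiv c_n\not\equiv 0\ (\bmod\ p)$ and the divisibility by $p$ of the extremal coefficients of $\Delta$ --- so neither is more general; what your version buys is a cleaner modular structure (kernel, surjectivity, counting) at the cost of an appeal to the classification of finite abelian groups, whereas the paper's version is more hands-on and yields the explicit coordinates $(x_0,\dots,x_{n-1})$ as the isomorphism, which is convenient later when the shift dynamics on these sequences is studied.
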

\begin{proof}
We will prove that $ x_0,x_1,\ldots ,x_{n-1} \in \Z /p^r $ uniquely determine the 
sequence $ \{x_i \}_{i \in \Z }, \;x_i \in \Z /p^r $, satisfiing equation 
\eqref{eq1}.
The proof is by induction. For $ r=1, $ given  $ x_0, x_1,\ldots, x_{n-1}
\in \Z /p,\;\; x_n  $ is uniquely determined $\mod p $ by the equation 
\begin{equation}\label{eq0}
c_0x_0+c_1x_1+\ldots +c_nx_n=0 \; (\mod p).
\end{equation}
So, 
 $ x_0, x_1,\ldots, x_{n-1} \mod p $ uniquely determine
 the whole sequence $ \{x_i\}_{i \in \Z} \;$  
$ (\mod p), $ satisfiing \eqref{eq1}.
This proves the base of induction.

Suppose the statement is true for $ r $. 
Fix  $  x_0, x_1,\ldots, x_{n-1} \;\mod p^{r+1}$  
and let  $ \{x_i \}_{i \in \Z} $ be the sequence satisfiing equation: 
\begin{equation}\label{0}
pd_kx_{-k}+\ldots + pd_1x_{-1} +c_0x_0+c_1x_{1}+ \ldots +c_nx_{n}
+ \ldots +pd_kx_{n+k}=0 \;
\mod p^r. 
\end{equation}
It is uniquely determined $ \mod p^r $, by induction assumption.  
But then all the terms of 
\eqref{0} 
except $c_nx_n$ are determined $ \mod p^{r+1}$. So $x_n $  
and hence 
the whole sequence $ \{x_i\}_{i \in \Z} $ is uniquely determined 
$\mod p^{r+1}$ 
by $  x_0, x_1,\ldots, x_{n-1} \; \mod p^{r+1}. $ 
\end{proof}

\section{Linear matrix reccurence equations}
\begin{thm}\label{LA}
Let $\: \XX,\YY \:$ be two finite modules of the same order, 
over the same ring $\: R. $
Let $\: A,B : \XX \longrightarrow \YY \: $ be modules homomorphisms such that 
$\: \ker A
\cap \ker B =0. \:$  Consider the following reccurence equation:
\begin{equation}\label{*}
Bx_j-Ax_{j+1}=0
\end{equation}
Then $\:\XX=\mathcal{V} \oplus \mathcal{A} \oplus \mathcal{B},\: $ where
$ \;\mathcal{V}=\{v \in \XX \;: \; $  there exists a bi-infinite sequence 
$\ldots v_{-1}, v_0=v, v_1, \ldots ,$ satisfiing equation \eqref{*},  \}
 
\noindent
$\mathcal{A}=\{a \in \XX \;:\;$ there exists an infinite 
sequence $\:\ldots,
a_{-1},a_0 \;$ satisfiing \eqref{*} and $\;a_{-i}=0 \;$ 
for sufficiently large $\: i $ \}. 

\noindent
$\mathcal{B}=\{b \in \XX \;:\;$ there exists an infinite sequence
$ \;b_0=b, b_1, b_2,
\ldots,\: $ satisfying\eqref{*} 
and $ \;b_i =0 \;$ for sufficiently large $\: i .\}$ 
\end{thm}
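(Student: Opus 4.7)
The plan is to prove Theorem~\ref{LA} by a Fitting-type decomposition of the finite module $\XX$, regarding the two-term linear recurrence $Bx_j = Ax_{j+1}$ as a multivalued discrete dynamical system on $\XX$. Heuristically, $\VV$ plays the role of the ``bijective'' (recurrent) part of the system, while $\AAA$ and $\BB$ are the ``nilpotent'' pieces attached to the left and right ends respectively, with the hypothesis $\ker A \cap \ker B = 0$ providing just enough rigidity to separate the three.

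First I would set up ascending filtrations by $\AAA_0 = \BB_0 = 0$, $\AAA_{k+1} = A^{-1}(B\AAA_k)$ and $\BB_{k+1} = B^{-1}(A\BB_k)$; these are precisely the elements extending by one more step backward (resp.\ forward) into the previous level, so $\AAA = \bigcup_k \AAA_k$ and $\BB = \bigcup_k \BB_k$. Dually I would define descending filtrations $\VV^+_k, \VV^-_k$ of the $k$-step forward- and backward-extendable elements. Finiteness of $\XX$ forces all four chains to stabilize, and a K\"onig-type argument identifies $\VV = \VV^+ \cap \VV^-$ both with the set of bi-infinitely extendable elements and with the largest submodule $W \subseteq \XX$ satisfying $AW = BW$.

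Next I would establish directness of the sum $\VV + \AAA + \BB$. The crucial identity is $\AAA \cap \BB = 0$, which is where $\ker A \cap \ker B = 0$ enters decisively: an element in this intersection yields a bi-infinite solution of finite support, whose leftmost nonzero term is forced into $\ker A$ and rightmost into $\ker B$. These constraints must then be propagated through the interior of the support by an inductive shortening argument, forcing the whole sequence to vanish. The complementary identity $\VV \cap (\AAA + \BB) = 0$ reduces, by subtracting the bi-infinite, left-finite and right-finite sequences associated to $v, a, b$, to the same non-existence of nonzero finite-support solutions.

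Finally, I would prove $\XX = \VV + \AAA + \BB$ by a cardinality count. Using the hypothesis $|\XX| = |\YY|$, the successive quotients of the filtrations built from the restrictions of $A$ and $B$ can be matched to yield $|\VV| \cdot |\AAA| \cdot |\BB| \geq |\XX|$, and combining with the direct-sum inequality from the previous step forces equality. The main obstacle I anticipate is the support-shrinking induction used to prove $\AAA \cap \BB = 0$: examining only the leftmost and rightmost nonzero coordinates constrains them to $\ker A$ and $\ker B$ respectively, which alone does not close the argument unless the support is a single index, so propagating the rigidity of $\ker A \cap \ker B = 0$ through the interior of the support via the recurrence is the most delicate part of the proof.
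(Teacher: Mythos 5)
Your architecture (filtrations for $\AAA$, $\BB$, $\VV$; directness via the non-existence of nonzero finite-support solutions; a counting argument for spanning) is genuinely different from the paper's proof, which runs an induction on $|\XX|$ by passing to $\XX/\ker A$ and $\YY/B(\ker A)$ and lifting the decomposition, and which in fact only establishes the spanning statement $\XX=\VV+\AAA+\BB$ without ever addressing directness. But the step you yourself flag as delicate --- showing that a finite-support bi-infinite solution must vanish, hence $\AAA\cap\BB=0$ --- is not merely delicate: it is false under the stated hypotheses. Take $R=\Z/2$, $\XX=\YY=(\Z/2)^3$ with basis $e_1,e_2,e_3$, and define $Ae_1=0$, $Ae_2=e_2$, $Ae_3=e_1$ and $Be_1=e_1$, $Be_2=e_2$, $Be_3=0$. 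Then $\ker A=\langle e_1\rangle$ and $\ker B=\langle e_3\rangle$, so $\ker A\cap\ker B=0$ and $|\XX|=|\YY|$; yet the sequence $x_0=e_1$, $x_1=e_3$, $x_j=0$ otherwise satisfies \eqref{*} for every $j$ (the only nontrivial check is $Bx_0=e_1=Ax_1$). Hence $e_1\in\VV\cap\AAA\cap\BB$, and in this example one computes $\VV=\XX$ and $\AAA=\BB=\langle e_1,e_3\rangle$, so the sum is very far from direct. No propagation of rigidity through the interior of the support can close your argument: the hypothesis only pins the leftmost nonzero entry to $\ker A$ and the rightmost to $\ker B$, and the example shows this is already compatible with a nonzero solution of support size two.

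What your plan actually needs is the uniqueness-of-continuation property (two solutions of \eqref{*} agreeing at one index agree everywhere), which the paper imports in Section 4 from Silver--Williams and Kitchens for the specific $A,B$ coming from an Alexander matrix; that property instantly kills nonzero finite-support solutions and yields $\AAA\cap\BB=0$ and $\VV\cap(\AAA+\BB)=0$ by exactly the subtraction-and-concatenation argument you sketch. Even granting it, your final counting step is only asserted: the inequality $|\VV|\cdot|\AAA|\cdot|\BB|\ge|\XX|$ does not follow from matching filtration quotients in any obvious way, and it is essentially as hard as the spanning statement that the paper proves by its quotient induction (lifting $[v],[a],[b]$ from $\XX/\ker A$ and absorbing the ambiguity $\tilde a\in\ker A$ into $\AAA$). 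So either add uniqueness of continuation as a hypothesis and borrow the paper's inductive construction for the spanning part, or accept that the decomposition cannot be proved --- indeed does not hold --- at the stated level of generality.
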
 
\begin{proof}
The proof is   by induction in the order of $ \XX $ and $ \YY $. 
Consider a diagram :
$$
\xymatrix{
& \XX \ar[dl]_{A}
    \ar[d]^{\pi _1}
    \ar[dr]^{B} \\
\YY \ar[dr]_{\pi _2} &
\XX /\ker A \ar @<-2pt>[d]_{\bar{A}}
         \ar @<2pt>[d]^{\bar{B}} &
\YY \ar[dl]^{\pi _2} \\
& \;\;\;\;\; \YY /B(\ker A) & }
$$
where by definition,
$ \pi _1 $ and $ \pi _2 $ are factorization maps;
$\; [x]= \pi _1(x); \;$ and
$$
\bar{\mathcal{A}}([x])=\pi _2\circ A (x),\;\;\;
\bar{\mathcal{B}}([x])=\pi _2\circ B (x).
$$

\noindent
This diagram is not commutative, but its
left- and right-hand triangles are commutative.
Note that $\: \XX / \ker A \:$ and $\: \YY / B(\ker A)\:$ 
are modules over $ \:R \: $ 
of the same order,
since $ B $ is injective on $ \ker A $.
   
Suppose that the statement of the theorem is true for $ \XX / \ker A \;$ 
and operators $\: \bar{A}\: $ and $\: \bar{B}:$
\begin{equation}
 \XX /\ker A = \bar{\mathcal{V}}  \oplus 
\bar{\mathcal{A}}  \oplus \bar{\mathcal{B}}, 
\end{equation}
where all the  sequences in definition of $\; \bar{\mathcal{V}},\:
\bar{\mathcal{A}},\: \bar{\mathcal{B}} \:$
satisfy 
the equation:
\begin{equation}\label{**}
\bar{B}[x]_i- \bar{A}[x]_{i+1}=[0].
\end{equation}
Then we will prove that
\begin{equation}
\XX =\mathcal{V} \oplus \mathcal{A} \oplus \mathcal{B}, 
\end{equation} 
Take any $ \:u \in \XX .\: $
By induction assumption $\;[u]=[v]+[a]+[b]\;$, where
$\;[v] \in \bar{\mathcal{V}},\; [a] \in \bar{\mathcal{A}},\;  
[b] \in \bar{\mathcal{B}}. \;$ We find lifts $\; v,a,b \;$ of 
$\; [v],\: [a],\:[b] \;$ to $\; \mathcal{V},\: \mathcal{A},\: \mathcal{B}\; $ 
respectively.  
Let $\: \ldots , [v_{-1}], [v_0]=[v],[v_1], \ldots \: $ satisfy $ \:\bar{B}[v_i]-
\bar{A}[v_{i+1}]=[0],\;\; i \in \Z  .$
Take any lift $ \ldots, y_{-1}, y_0, y_1, \ldots .$ Then $\: By_i-Ay_{i+1}=x_i
 \in B(\ker A).\; $ So $\: x_i=Bw_i\: $ for some $\; w_i \in \ker A.\;$ 
Then $$ B(y_i-w_i)-A(y_{i+1}-w_{i+1})=0. \;$$
So $\; v_i=y_i-w_i \;$ satisfy \eqref{*} and $ \;v=v_0 \in \mathcal{V}\; $ 
is a desired lift of $ \;[v].\;$

Similarly, for $\; [a] \in \bar{\mathcal{A}}\; $ there exists a sequence
$\; \ldots , [a]_{-1}, [a_0]=[a] ,\; $ satisfying\eqref{**} with 
$\;[a]_{-i}=[0] \;$ for $\: i \ge N \: $.  As before, we can find a lift  
$\; \{ a_{-i}\}_ {  i\ge 0} ,\: $  satisfying$\; Ba_{-i}- Aa_{-(i-1)}=0.\; $ 
Note that $\: a_{-i} \in \ker A \:$ for $\: i \ge N. \:$ 
We have $$B \cdot 0=Aa_{-N} .$$ 
But then the sequence 
$\; \ldots , 0, 0, a_{-N}, a_{-(N-1)}, \ldots, a_0 \;$ 
also  satisfies \eqref{*}, 
so $\; a=a_0 \in \mathcal{A} \;$ is a desired lifting. 

We repeat the same argument to  prove that $ \;[b]\; $
has a lift $\; b \in \mathcal{B}.\; $  
If $\;\{ [b_i]\} _{i \ge 0}\: $ satisfies \eqref{**} and $\; [b_i]=0 \;$ 
for $ \; i \ge N,\; $ 
we find a lift $ \;  \{ b_i \} _{i \ge 0}\: $ satisfying\eqref{*}.
Since $\; b_i \in \ker A \; $ for $\;i \ge N,\; $
and $\; Bb_i-Ab_{i+1}=0 ,\; $ we have  also $\;b_i \in \ker B \;
 \text{ for } i \ge N-1,\; $ 
hence $\; b_i=0 \text{ for } i\ge N-1,\; $ since by assumption $\;\ker A \cap 
\ker B = 0. \;$
So $\;  b=b_0 \in \mathcal{B}\; $ 
is a desired lift. Since $\; \pi _1(u)=\pi _1(v+a+b), \;
u= v+a+b+\tilde{a},\;$ where 
$\;\tilde{a} \in \ker A \;$ and so $\; \tilde{a} \in \mathcal{A}.\;$
The step of induction is done.  

Since we can interchange the roles of $ A $ and $ B $, it remains to prove 
the statement of the theorem in the case when $ A $ and  $ B $ are monomorphisms
and hence are isomorphisms, since $ \;|\XX |=|\YY |.\;$ In this case any 
element 
$ x \in \XX \;$
has a  bi-infinite continuation $ \; x_i=(A^{-1}B)^ix, \;$ satisfying\eqref{*}.
The theorem is proven.
\end{proof}

\section{Main result for a general knot}
In this section we prove that the Theorem \ref{thm1}  holds for 
any knot.
Let $ B-tA $ be the Alexander matrix of a general knot $ \KK $ arising from
the Wirtinger presentation of the knot group $ G $. Here $ A,B  $ are 
$ m\times m$
matrices with elements $ 0, \pm 1 $. 
\begin{thm}
Dynamical system $ ( \Hom (K, \Sigma ), \sigma _x) $ is conjugate to the left 
shift in the space of bi-infinite sequences 
$ \{y_j\}_{j\in \Z },\: y_j \in (\Sigma )^m $
satisfyingreccurence equation
\begin{equation}\label{BA}
By_j-Ay_{j+1}=0. 
\end{equation}
\end{thm}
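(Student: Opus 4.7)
The plan is to follow the template of Theorem~9.1 in \cite{L}, where this identification was carried out for two-bridge knots with $\Sigma = \Z/p$; the general Wirtinger setup is completely parallel. Start from the Wirtinger presentation $G = \langle x_1, \ldots, x_m \mid r_1, \ldots, r_m \rangle$ and take $x = x_1$ as the distinguished meridian. All Wirtinger generators are meridians, so $a_i := x_i x^{-1} \in K$ (with $a_1 = 1$), and the conjugates $x^j a_i x^{-j}$, for $j \in \Z$ and $i = 1, \ldots, m$, generate $K$ as a group. Because $\Sigma$ is abelian, every $\rho \in \Hom(K, \Sigma)$ factors through the abelianization $K/[K,K]$, which is the Alexander module $H_1(X_\infty; \Z)$ with its $\Z[t,t^{-1}]$-action induced by conjugation by $x$. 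Hence $\rho$ is completely determined by the vectors
\[
y_j := (\rho(x^j a_1 x^{-j}), \ldots, \rho(x^j a_m x^{-j})) \in \Sigma^m,
\]
and the assignment $\Phi : \Hom(K, \Sigma) \to (\Sigma^m)^{\Z}$, $\rho \mapsto \{y_j\}_{j \in \Z}$, is injective.

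The heart of the proof is identifying the image of $\Phi$ with the bi-infinite sequences satisfying \eqref{BA}. Each Wirtinger relation $r_\ell$ has the form $x_i x_j = x_k x_i$; rewriting it in terms of the $a_i$'s and passing to the abelianization produces, inside the Alexander module, a $\Z[t,t^{-1}]$-linear relation among the $a_i$'s whose coefficients form the $\ell$-th row of $B - tA$. This is precisely the Fox-calculus derivation of the Alexander matrix from the Wirtinger presentation, and is the one step of the argument that requires genuine bookkeeping. Conjugating $r_\ell$ by $x^j$ shifts this relation by $t^j$, so evaluating under $\rho$ yields the $\ell$-th coordinate of $B y_j - A y_{j+1} = 0$. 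Conversely, any bi-infinite sequence $\{y_j\}$ satisfying \eqref{BA} encodes a $\Z[t,t^{-1}]$-module map from the Alexander module into $\Sigma$, hence a group homomorphism $K \to \Sigma$, which gives surjectivity of $\Phi$ onto the solution set.

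The intertwining with the shift is then automatic: directly from the definition,
\[
\sigma_x \rho(x^j a_i x^{-j}) = \rho(x^{j+1} a_i x^{-(j+1)}),
\]
so $\Phi \circ \sigma_x = S \circ \Phi$, where $S$ is the left shift on $(\Sigma^m)^{\Z}$. Combining these three points yields the claimed conjugacy. The main obstacle, and essentially the only nontrivial step, is the verification in the middle paragraph that the abelianized Wirtinger relations correspond, row-by-row, to the entries of $B - tA$; once this identification is fixed (and the indexing convention for $A$ versus $B$ is consistent with the one used for the two-bridge case), the rest of the argument is definitional.
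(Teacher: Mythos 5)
Your proof is correct and takes essentially the same route as the paper, which for this theorem simply defers to Theorem 4.2 of \cite{L} (proved there for $\Sigma=\Z/p$) and notes that the argument for a general abelian $\Sigma$ is identical; your reconstruction via the Reidemeister--Schreier generators $x^j a_i x^{-j}$, factoring through the Alexander module, and the row-by-row identification of the abelianized shifted Wirtinger relations with $B-tA$ is exactly that argument. The only cosmetic slip is that a sequence satisfying \eqref{BA} defines an abelian-group homomorphism $H_1(X_\infty;\Z)\to\Sigma$ (equivalently a homomorphism $K\to\Sigma$), not a $\Z[t,t^{-1}]$-module map, since $\Sigma$ carries no distinguished $t$-action.
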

For  the target group $ \Z /p $
this result is proven  in \cite{L}, Theorem 4.2.
For a general abelian group $ \Sigma $ 
the proof is identical .

We can apply theorem (\ref{LA}) for modules $ (\Z /p^r)^m $ 
and linear operators 
$A,B : (\Z /p^r)^m \ra  (\Z /p^r)^m $  given by matrices $ A $ and $ B $ 
to get
\begin{equation}\label{oplus}
(\Z/p^r)^m=\VV_r \oplus \mathcal{A}_r \oplus \BB_r ,
\end{equation}
where $\VV_r =\{ y \in (\Z/p^r)^m  \; :$  there exists 
a bi-infinite sequence
 $\;\;\ldots , y_{-1},y_0=y, y_1,\ldots , \;\;$ 
satisfying equation \eqref{BA}$\}$, 

\noindent
$ \mathcal{A}_r=\{a \in (\Z/p^r)^m \; :$  there exists an 
infinite sequence 
$\;\ldots ,a_{-1},a_0=a \;$, satisfying \eqref{BA} and $a_{-i}=0 \;$ for 
sufficiently large $ i\; \}$,

\noindent
$\BB_r =\{b \in (\Z/p^r)^m \; :$  there exists an infinite sequence 
$ \;\;b=b_0, b_1, b_2,\ldots,\;\; $ satisfying \eqref{BA} and $ b_i=0 $ 
for sufficiently 
 large $ i \;\}$.
 
We will use the uniquiness of continuatuion that follows from the finiteness 
of $ \Hom(K,\Sigma ) $ for a finite abelian group $ \Sigma $
(see   Proposition 3.7 \cite{SW 2} and Theorem 1 (ii) \cite{K}). 
If $ \{x_i\}_{i \in \Z } \; $ and $ \{y_i\}_{i \in \Z } \; $ satisfy \eqref{BA},
then $ x_0=y_0 $ implies $ x_i=y_i \; \forall \:i.$ In particular, 
for $ a \in \mathcal{A}_r,\; a \ne 0 ,\;$ there is no infinite continuation 
to the right, satisfying \eqref{BA},  
and for $ b \in \mathcal{B}_r,\; b \ne 0 ,\;$ there is no 
infinite continuation to the left, satisfying \eqref{BA}.
(Otherwise we would have  two bi-infinite sequences: 
$\ldots , 0, 0, \ldots, a_0, a_1, \ldots $ and $ \ldots , 0, 0, \ldots $.)
So $ \Hom (K, \Z /p^r) $ being isomorphic to the space of be-infinite sequences
satisfying \eqref{BA}, is isomorphic to $ \mathcal{V}_r .$

Since the only decomposition of $ (\Z /p^r)^m $ as a direct sum of three 
groups is 
$$
 (\Z /p^r)^m\cong (\Z /p^r)^{n_r} \oplus (\Z /p^r)^{l_r} \oplus (\Z /p^r)^{m_r} 
\text{ with } n_r+l_r+m_r=m,
$$
 it follows from \eqref{oplus} that $\mathcal{V}_r\cong (\Z /p^r)^{n_r}$. 
Consider the projection: 
$$
\xymatrix{ 
(\Z /p^{r+1})^m  = \mathcal{V}_{r+1} \oplus \mathcal{A}_{r+1} 
\oplus \mathcal{B}_{r+1}\ar @<-40 pt>[d] _{\pi }\\
(\Z /p^r)^m = \mathcal{V}_r \oplus \mathcal{A}_r \oplus \mathcal{B}_r }
$$
Clearly $ \pi (\mathcal{V}_{r+1}) \subset \mathcal{V}_r , \;
 \pi (\mathcal{A}_{r+1}) \subset \mathcal{A}_r , \;
 \pi (\mathcal{B}_{r+1}) \subset \mathcal{B}_r \;.$
It follows that $ n_r $ is the same for all $ r $.
Since from Theorem 5.5 \cite{L} it immediately follows that 
$ n_1 =\deg (\Delta (t) \mod p ),$ we have proven the following theorem:
\begin{thm}\label{genresult1}
For any knot, $ \Hom (K,\Z /p^r) \cong (\Z /p^r)^n,$
where $ n=\deg (\Delta (t) \mod p)$.
\end{thm}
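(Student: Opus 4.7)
The plan is to combine the decomposition Theorem \ref{LA} with the uniqueness of continuation for solutions of the matrix recurrence \eqref{BA}. By the preceding conjugacy theorem, $\Hom(K,\Z/p^r)$ is in bijection with the space of bi-infinite sequences $\{y_j\}_{j\in\Z}$ in $(\Z/p^r)^m$ satisfying $By_j - Ay_{j+1}=0$, so it suffices to count these.

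First I would apply Theorem \ref{LA} with $\XX = \YY = (\Z/p^r)^m$ and the matrices $A,B$ coming from the Wirtinger Alexander matrix $B-tA$, yielding a decomposition $(\Z/p^r)^m = \mathcal{V}_r \oplus \mathcal{A}_r \oplus \mathcal{B}_r$. Then, invoking the uniqueness of continuation (Proposition 3.7 of \cite{SW 2} and Theorem 1(ii) of \cite{K}), a nonzero element of $\mathcal{A}_r$ or $\mathcal{B}_r$ cannot occur as $y_0$ for a bi-infinite solution: indeed, such an element would produce a bi-infinite extension of a one-sided zero tail distinct from the all-zero sequence, contradicting uniqueness. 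Hence $\Hom(K,\Z/p^r) \cong \mathcal{V}_r$, and the problem is reduced to computing the structure of $\mathcal{V}_r$.

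The main obstacle is to show that $\mathcal{V}_r \cong (\Z/p^r)^n$ with $n$ independent of $r$. I would do this inductively using the reduction-mod-$p^r$ projection $\pi : (\Z/p^{r+1})^m \to (\Z/p^r)^m$, which preserves the recurrence and therefore sends $\mathcal{V}_{r+1}, \mathcal{A}_{r+1}, \mathcal{B}_{r+1}$ into $\mathcal{V}_r, \mathcal{A}_r, \mathcal{B}_r$ respectively. The delicate step is to argue that $\mathcal{V}_{r+1}$ is a free $\Z/p^{r+1}$-module of the same rank $n_r$ at which $\mathcal{V}_r$ is free over $\Z/p^r$; this should follow from the structure theorem for finite abelian $p$-groups combined with a cardinality/rank comparison of the three-term direct-sum decompositions at consecutive levels (together with the observation that $\pi$ restricted to each summand is surjective onto the corresponding summand below). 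The base case $n_1 = \deg(\Delta(t)\bmod p)$ is furnished by Theorem 5.5 of \cite{L}, which handles the $\Z/p$ case. Feeding the base case into the induction yields $\Hom(K,\Z/p^r) \cong (\Z/p^r)^n$ with $n = \deg(\Delta(t) \bmod p)$, as claimed.
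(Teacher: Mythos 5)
Your proposal is correct and follows essentially the same route as the paper's own proof: reduction to bi-infinite solutions of \eqref{BA}, the decomposition of Theorem \ref{LA} combined with uniqueness of continuation to identify $\Hom(K,\Z/p^r)$ with $\mathcal{V}_r$, freeness of the summands over $\Z/p^r$, the projection $\pi$ to show the rank $n_r$ is independent of $r$, and the base case $n_1=\deg(\Delta(t)\bmod p)$ from Theorem 5.5 of \cite{L}. Your remark that $\pi$ surjects each summand onto the one below is a useful explicit supplement to the paper's terser statement that the ranks stabilize.
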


\section{Least common multiple}

\begin{prop}
The dynamical system $ (\Hom(K, \Z /p^r), \sigma _x) $ is isomorphic to 
$ (\VV _r,T_r) $, where
$ T_r=(A | \mathcal{V}_r)^{-1}( B | \mathcal{V}_r).\;$
\end{prop}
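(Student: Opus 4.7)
The plan is to chain the identifications already built in the previous section and then read off the formula from the recurrence.

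First, I would invoke the identification from Section~4 of $\Hom(K,\Z/p^r)$ with the space $S_r$ of bi-infinite sequences $\{y_j\}_{j\in\Z}$ in $(\Z/p^r)^m$ satisfying \eqref{BA}, under which $\sigma_x$ becomes the left shift. The evaluation $\{y_j\}\mapsto y_0$ sends $S_r$ bijectively onto $\VV_r$: surjectivity is immediate from the definition of $\VV_r$ in \eqref{oplus}, and injectivity is the uniqueness of continuation recalled right after \eqref{oplus} (Proposition~3.7 of \cite{SW 2}, Theorem~1(ii) of \cite{K}). Through this bijection, the left shift becomes the map $T\colon\VV_r\to\VV_r$ defined by $T(y_0)=y_1$.

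Next I would verify that this $T$ coincides with $T_r=(A|\VV_r)^{-1}(B|\VV_r)$. Evaluating \eqref{BA} at $j=0$ gives $A(Ty_0)=Ay_1=By_0$, i.e.\ $A\circ T=B$ as maps $\VV_r\to(\Z/p^r)^m$. Since $T$ is a bijection of the finite set $\VV_r$, it follows that $A(\VV_r)=AT(\VV_r)=B(\VV_r)$, so the composition $(A|\VV_r)^{-1}\circ(B|\VV_r)$ genuinely lands in $\VV_r$; once $A|\VV_r$ is known to be injective, this composition is well-defined and must equal $T$.

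The only point that needs a separate argument is the injectivity of $A|\VV_r$, and that is exactly where the direct-sum decomposition \eqref{oplus} (coming from Theorem~\ref{LA}) earns its keep. Suppose $y\in\VV_r$ with $Ay=0$. Consider the one-sided sequence $a_0=y$ and $a_{-i}=0$ for all $i\ge 1$; the recurrence in \eqref{BA} holds trivially for $j\le -2$, and at $j=-1$ it reduces to $-Ay=0$, which holds by hypothesis. Hence $y\in\mathcal{A}_r$, and the direct sum \eqref{oplus} forces $y\in\VV_r\cap\mathcal{A}_r=0$. I expect this last decomposition argument to be the only mildly delicate step; everything else is bookkeeping transporting the shift through the three successive bijections.
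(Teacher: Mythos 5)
Your proof is correct and follows essentially the same route as the paper: both rest on the observation that $\ker A\subset\mathcal{A}_r$ so that the direct sum \eqref{oplus} forces $A|_{\VV_r}$ to be injective, together with $A\VV_r=B\VV_r$ and the identification of $\sigma_x$ with the left shift transported to $\VV_r$ by evaluation at $j=0$. You merely spell out the steps the paper dismisses as \lq\lq obvious\rq\rq\ (the explicit sequence witnessing $\ker A\subset\mathcal{A}_r$ and the uniqueness-of-continuation argument for injectivity of the evaluation map), which is a welcome addition but not a different argument.
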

\begin{proof}
Restrictions $ A | \mathcal{V}_k \; $ and $ B | \mathcal{V}_k \; $
are isomorphisms, since 
$ \ker A \in \mathcal{A}_k $ and $ \ker B \in \mathcal{B}_k.$
Also $ A \mathcal{V}_k = B \mathcal{V}_k $ since every element  
$v \in \mathcal{V}_k $ has continuation to the right and to the left: 
there exist $  v_{-1} $ and $ v_1 $ such that $ Bv_{-1}=Av, \; Bv=Av_1.$
So $ T_r :\VV _r \ra \VV _r $ is well defined, and since $ T_r $ is conjugate 
to the left shift in the space of sequences satisfiing equation\eqref{RE}, 
the formula 
$ T_r=(A | \mathcal{V}_r)^{-1}( B | \mathcal{V}_r)\;$ is obvious.
\end{proof}

In \cite{L} we calculated the set of periods of orbits and their lcm
for dynamical system $ (\Hom (K, \Sigma ), \sigma _x) $ with $ \Sigma =\Z /p $ 
in terms of orders and 
multiplicities of the roots of $ \Delta (t)\; \mod p $.
Now we find the lcm and  the set of periods for $ \Sigma =\Z /p^r $.
\begin{thm}\label{lcm}
Let $d_r= $ lcm  of periods of orbits of $ (\Hom (K,\Z /p^r), \sigma _x) $.
Then either $d_i=d_1 \;\forall i $, or
$\exists \:s\ge 1 $ such that $ d_1= \ldots =d_s, $ and $ d_{s+i}=d_1p^i $.
\end{thm}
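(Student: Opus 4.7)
The plan is to reinterpret $d_r$ as the multiplicative order of the automorphism $T_r = (A|\VV_r)^{-1}(B|\VV_r)$ on $\VV_r \cong (\Z/p^r)^n$ from the preceding proposition, and then analyze how this order evolves with $r$ via the projection $\pi : \Z/p^{r+1} \to \Z/p^r$. Since $T_{r+1}$ reduces to $T_r$ modulo $p^r$, one immediately obtains $d_r \mid d_{r+1}$; the core of the argument will be to establish the dichotomy $d_{r+1}/d_r \in \{1,p\}$ and to show that once this ratio equals $p$ it persists for all larger indices.

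For the dichotomy, I would write
\[
T_{r+1}^{d_r} \equiv I + p^r M_r \pmod{p^{r+1}}
\]
for some matrix $M_r$ well-defined modulo $p$. For $r \geq 1$, every term $p^{rk}M_r^k$ with $k \geq 2$ vanishes modulo $p^{r+1}$, so the binomial expansion collapses to
\[
(I + p^r M_r)^k \equiv I + k p^r M_r \pmod{p^{r+1}}.
\]
Hence $T_{r+1}^{k d_r} \equiv I \pmod{p^{r+1}}$ exactly when $k M_r \equiv 0 \pmod p$, giving $d_{r+1} = d_r$ if $M_r \equiv 0 \pmod p$ and $d_{r+1} = p\, d_r$ otherwise.

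For the propagation step, assuming $M_r \not\equiv 0 \pmod p$, I would lift $M_r$ to a matrix $\tilde M_r$ modulo $p^{r+2}$ and compute
\[
T_{r+2}^{d_{r+1}} = (I + p^r \tilde M_r)^p \pmod{p^{r+2}}.
\]
For $p$ odd and $r \geq 1$ (or $p = 2$ and $r \geq 2$), every cross term in the binomial expansion has $p$-adic valuation at least $r+2$, leaving $T_{r+2}^{d_{r+1}} \equiv I + p^{r+1}\tilde M_r \pmod{p^{r+2}}$. Since $\tilde M_r \not\equiv 0 \pmod p$, the previous dichotomy then forces $d_{r+2} = p\, d_{r+1}$. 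Defining $s$ to be the smallest index at which $M_r \not\equiv 0 \pmod p$ (or $s = \infty$ if no such index exists) yields the two cases in the statement.

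The main obstacle is the remaining $p = 2$, $r = 1$ case, where $(I + 2M)^2 \equiv I + 4(M + M^2) \pmod 8$ retains an extra quadratic contribution that vanishes modulo $2$ precisely when $M \equiv I \pmod 2$. To rule this pathological configuration out I would invoke the knot-theoretic fact that the determinant $|\Delta(-1)|$ is odd, so that $T + I$ is invertible modulo $2$ on $\VV_r$ and $T^{d_1}$ cannot be scalar modulo $4$ in the required way. A cleaner alternative is to pass to the inverse limit $T \in GL_n(\Z_p)$ and apply the $p$-adic logarithm on the appropriate congruence subgroup; there the whole statement reduces to a single computation, with $s$ equal to the $p$-adic valuation of $\log T^{d_1}$ (respectively $\log T^{d_2}$ for $p=2$), uniformly handling both primes.
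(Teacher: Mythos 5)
Your argument for odd $p$ is, in substance, the paper's own: the paper passes to the inverse limit $T=\underleftarrow{\lim}\,T_r$ acting on $\VV=\underleftarrow{\lim}\,\VV_r\subset(\Z_p)^m$, writes $T^{d_1}=E+p^sA$ with $A\not\equiv 0 \pmod p$, and expands $(E+p^sA)^k=E+kp^sA+\binom{k}{2}p^{2s}A^2+\cdots$ to conclude $T^{d_1p^i}=E+p^{s+i}A_i$ with $A_i\not\equiv 0\pmod p$. Your level-by-level formulation with the matrices $M_r$ is the same binomial computation organized locally instead of $p$-adically, and your dichotomy and propagation steps are correct (indeed slightly more carefully justified) for $p$ odd, and for $p=2$ once $r\ge 2$.

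The genuine divergence is at $p=2$, $r=1$, and there your proposal has a real gap. You correctly isolated the obstruction $(I+2M)^2\equiv I+4(M+M^2)\pmod 8$ — which the paper silently skips, since its claim that $T^{2d_1}=E+p^{s+1}A_1$ with $A_1\not\equiv 0$ fails for $p=2$, $s=1$ whenever $M^2\equiv M\pmod 2$ (any nonzero idempotent, not only $M\equiv I$). But your proposed repair via the oddness of the knot determinant $|\Delta(-1)|$ does not work. The trefoil is a counterexample: $\Delta(t)=t^2-t+1$, so $\Delta(-1)=3$ is odd, yet the shift matrix satisfies $T^3=-E$ and $T^6=E$ over $\Z$, whence $d_1=3$ while $d_r=6$ for every $r\ge 2$. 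This violates the printed conclusion itself (which would force $d_3=12$), so no argument can close this case: the statement, and the paper's proof of it, must be amended for $p=2$. Your \lq\lq cleaner alternative\rq\rq\ — rebasing the induction at $d_2$, equivalently using the $2$-adic logarithm on $I+4M_n(\Z_2)$ — is the right repair, but you should state explicitly that it proves a corrected theorem (either $d_i=d_2$ for all $i\ge 2$, or $d_2=\cdots=d_s$ and $d_{s+i}=d_2\cdot 2^i$ for some $s\ge 2$), not the one asserted here.
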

\begin{proof}
The following diagram commutes:
$$
\xymatrix{
\ldots  \ar [r]^{\pi } & \VV _{k+1} \ar [d]_{T_{k+1}}
                                   \ar [r]^{\pi } &
\VV _k \ar [d]_{T_k} \ar [r]^{\pi }  &
\;\ldots \;\ar [r]^{\pi } & 
\VV _1 \ar [d]^{T_1} \\ 
\ldots \;\ar [r]^{\pi } & \VV _{k+1} \ar [r]^{\pi } &
\VV _k  \ar [r]^{\pi } &
\;\ldots \; \ar [r]^{\pi } & 
\VV _1 
}
$$
Let $\VV =\underleftarrow{\lim } \VV _k , \; \VV_k \subset (\Z _p)^m $,
where $ \Z _p $ is the set of $p-$adic numbers, and $ T:\VV \ra \VV ,$ 
$ T=\underleftarrow{\lim }T_k $.
 We will use the same notations  for module homorphisms 
 and their matrices in the standard basis.
Let $ E_r, \; E $ denote the identity isomorphisms of  $  (\Z /p^r)^n $ and
$ (\Z _p)^n $ respectively.
We have $ T_1^{d_1}=E_1 $, so either $ T^{d_1}=E $, and then 
$ T_r^{d_1}=E_r \;\forall r $, 
or $T^{d_1}=E+p^sA $ for some $  s\in \Z ,\;s\ge 1 $, 
and not all elements of matrix $ A $ are divisible by $ p $.
In the later case $ T_i^{d_1}=E_i ,\;i=1,\ldots ,s $. Since 
$$
T^{d_1\cdot k}=(E+p^sA)^k=E+kp^sA+C_k^2p^{2s}A^2+\ldots + p^{s\cdot k}A^k,
$$
we have $ T^{d_1p}=E+p^{s+1}A_1 $, where not all elements of $ A_1 $ 
are divisible by $ p $,
and, by induction, $ T^{d_1p^i}=E+p^{s+i}A_i ,\; \forall \:i\ge 1 $, 
where not all elements of $ A_i  $ 
are divisible 
by $ p $. Then $ T_{s+i}^{d_1p^i}=E_{s+i} $  
and the statement of the theorem follows.
\end{proof}
\begin{prop}\label{periods}
Let $ Q_r \subset \N $ be the set of all periods of 
$ (\Hom(K,\Z /p^r), \sigma _x) $ .
Then $ Q_r \subset Q_{r+1}.$
\end{prop}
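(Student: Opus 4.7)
The plan is to construct a $T$-equivariant injection $i\colon\VV_r\hookrightarrow\VV_{r+1}$ given by ``multiplication by $p$''. Once we have this, an orbit of minimal period $q$ in $\VV_r$ maps to $q$ distinct points of $\VV_{r+1}$ forming a single $T_{r+1}$-orbit, necessarily also of period $q$, so $Q_r\subset Q_{r+1}$ follows at once.

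For $v\in\VV_r\subset(\Z/p^r)^m$, I choose an arbitrary set-theoretic lift $\tilde v\in(\Z/p^{r+1})^m$ and set $i(v):=p\tilde v$. Two lifts differ by an element of $p^r(\Z/p^{r+1})^m$, which is killed by multiplication by $p$, so $i(v)$ is well-defined. To see $i(v)\in\VV_{r+1}$, let $(y_j)\subset(\Z/p^r)^m$ be the bi-infinite solution of \eqref{BA} with $y_0=v$, pick arbitrary lifts $\tilde y_j\in(\Z/p^{r+1})^m$, and note that $B\tilde y_j-A\tilde y_{j+1}\in p^r(\Z/p^{r+1})^m$ is killed by $p$. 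Thus $(p\tilde y_j)$ is a bi-infinite solution of \eqref{BA} mod $p^{r+1}$ with $0$-th term $i(v)$, so $i(v)\in\VV_{r+1}$. The same computation makes it immediate that $i\circ T_r=T_{r+1}\circ i$, and injectivity follows since $p\tilde v=0$ in $(\Z/p^{r+1})^m$ forces $\tilde v\in p^r(\Z/p^{r+1})^m$, i.e.\ $v=0$ in $\VV_r$.

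Given $q\in Q_r$, pick any $v\in\VV_r$ of minimal period $q$; then $T_{r+1}^q i(v)=i(T_r^q v)=i(v)$, while the points $i(v),i(T_r v),\ldots,i(T_r^{q-1}v)$ are pairwise distinct by injectivity, so $i(v)$ has $T_{r+1}$-period exactly $q$ and $q\in Q_{r+1}$. There is no real obstacle here; the only step worth double-checking is that the multiplication-by-$p$ map carries $\VV_r$ into $\VV_{r+1}$, which reduces to the elementary identity $p\cdot p^r(\Z/p^{r+1})^m=0$.
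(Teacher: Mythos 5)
Your proof is correct and follows essentially the same route as the paper, which also embeds $\Hom(K,\Z/p^r)$ into $\Hom(K,\Z/p^{r+1})$ by multiplying solutions of \eqref{BA} by $p$. You are somewhat more careful than the paper in spelling out well-definedness, equivariance, and the injectivity needed to see that the minimal period does not drop, but the underlying idea is identical.
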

\begin{proof} 
If $ \{x_j\}_{j \in \Z},\; x_j \in \Z /p^r $ is a sequence satisfiing
reccurence equation \eqref{BA}  $ \mod p^r $ with period $ d, $ then 
 $ \{px_j\}_{j \in \Z},\; px_j \in \Z /p^{r+1} $ satisfies \eqref{BA} 
$ \mod p^{r+1} $ and
has the same period.
\end{proof}

Now we turn to a general finite abelian group $\Sigma $, which is isomorphic to 
a direct sum of cyclic groups:
$$
\Sigma =\bigoplus _{i\in I} \Z /p_i^{r_i}, \; I\subset \N.
$$
Then 
$$
\Hom (K,\Sigma )=\bigoplus _{i\in I}\Hom (K,\Z /p_i^{r_i})=
\bigoplus _{i\in I} (\Z /p_i^{r_i})^{n_i},\; \text{ where } 
n_i=\deg (\Delta (t) \mod p_i),
$$
and the original dynamical system is the product of dynamical systems:
$$
(\Hom (K,\Sigma ),\sigma _x)=\bigoplus _{i\in I}
(\Hom (K,\Z /p_i^{r_i} ),\sigma _x).
$$
Taking sums of orbits with different periods,
we obtain the following proposition:
\begin{prop}
(i) Let $ d_i $ be lcm of periods of orbits of 
$ (\Hom (K,\Z /p_i^{r_i} ),\sigma _x).$
Then lcm of periods of orbits of $  (\Hom (K,\Sigma ),\sigma _x) $ is 
lcm$\{d_i, i\in I\}.$

\noindent
(ii) Let $ Q_i $ be the set of periods of orbits of  
$ (\Hom (K,\Z /p_i^{r_i} ),\sigma _x).$
Then the set of periods for  $  (\Hom (K,\Sigma ),\sigma _x) $ is
$$
Q=\{lcm\{q_i, i\in I\}, \; q_i\in Q_i\}.
$$
\end{prop}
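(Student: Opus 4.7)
The plan is to reduce everything to the product structure displayed immediately above the proposition, namely
\[
(\Hom(K,\Sigma),\sigma_x)\;=\;\bigoplus_{i\in I}(\Hom(K,\Z/p_i^{r_i}),\sigma_x),
\]
so that $\sigma_x$ acts componentwise on tuples $\rho=(\rho_i)_{i\in I}$. The whole proposition is then a statement about periods in a finite product of finite dynamical systems, independent of any knot-theoretic data.

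First I would establish the single key lemma: if $\rho_i$ has exact $\sigma_x$-period $q_i$ for each $i\in I$, then the $\sigma_x$-period of $\rho=(\rho_i)$ is $\operatorname{lcm}\{q_i:i\in I\}$. This is standard — $\sigma_x^N\rho=\rho$ iff $\sigma_x^N\rho_i=\rho_i$ for every $i$, iff $q_i\mid N$ for every $i$, iff $\operatorname{lcm}\{q_i\}\mid N$. Since $I$ is finite (as $\Sigma$ is finite), the lcm exists.

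Next I would deduce part (ii). For the inclusion $Q\subseteq\{\operatorname{lcm}\{q_i\}:q_i\in Q_i\}$: every $\rho\in\Hom(K,\Sigma)$ decomposes as $\rho=(\rho_i)$, and each $\rho_i$ lies on some cycle of the $i$-th factor, hence has period $q_i\in Q_i$; by the lemma the period of $\rho$ is $\operatorname{lcm}\{q_i\}$, which is in the right-hand set. For the reverse inclusion: given any choice $q_i\in Q_i$, pick $\rho_i\in\Hom(K,\Z/p_i^{r_i})$ realizing period $q_i$; then $\rho=(\rho_i)$ has period $\operatorname{lcm}\{q_i\}$ by the lemma, so this value belongs to $Q$.

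Finally I would derive part (i) from part (ii). The lcm of all periods in $Q$ equals
\[
\operatorname{lcm}\bigl\{\operatorname{lcm}\{q_i:i\in I\}\,:\,q_i\in Q_i\bigr\}
=\operatorname{lcm}\bigl\{q_i:i\in I,\ q_i\in Q_i\bigr\}
=\operatorname{lcm}\{d_i:i\in I\},
\]
where the last equality uses that $d_i=\operatorname{lcm}\,Q_i$ by definition. There is no real obstacle here; the only minor point to be careful about is that the decomposition of $\Hom(K,\Sigma)$ into $\bigoplus_i\Hom(K,\Z/p_i^{r_i})$ is $\sigma_x$-equivariant, which is immediate from the fact that $\sigma_x$ is induced by conjugation by $x$ in $G$ and therefore commutes with the direct sum decomposition of $\Sigma$.
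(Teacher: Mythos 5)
Your proposal is correct and follows essentially the same route as the paper, which simply invokes the product decomposition $(\Hom(K,\Sigma),\sigma_x)=\bigoplus_{i\in I}(\Hom(K,\Z/p_i^{r_i}),\sigma_x)$ and the observation that "taking sums of orbits with different periods" yields the result; your key lemma (the period of a tuple is the lcm of the exact component periods) is precisely the content the paper leaves implicit. Nothing is missing.
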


\section{Pullback $\tau^*$ on the space of coverings over $X_\infty$}
Let  $ p_{\infty } : X_{\infty } \longrightarrow X  $ be the infinite
cyclic covering over the complement of the knot, 
and let $\tau: X_\infty\ra X_\infty$ be the deck
tansformation corresponding to the loop $x$. 
We will now give a geometric description of the transformation $\si_x$
earlier defined algebraicly. 

Let us remind the pullback construction.
Let $ P:\: E \ra B $ and $ f:\: Y \ra B $ be two continuous maps.  
$ \Gamma _P=\{(e,b):  
 e \in E,b \in B, P(e)=b \} \subset E \times B $ 
is the graph of $ P $.
We have $ \id \times f :\:E\times Y \ra E \times  B.$
Then, by definition, the pullback of  $ P $ by  $ f $,
$ f^*(P):\:(\id \times f)^{-1}\Gamma _P \ra Y $ is the projection onto the 
second coordinate. We have $(\id \times f)^{-1}\Gamma _P=
\{(e,y):\; e \in E, y \in Y, P(e)=f(y)\}$.  
The projection of this set onto the first coordinate, $\tilde{f}$, 
is the lift of $f$, since  the following diagram commutes:
$$
\xymatrix {
(e,y) \ar[r]^{\tilde{f}} 
      \ar[d]_{f^*(P)} &
e \ar[d]^P  \\
y \ar[r]^f & f(y)=P(e)
}
$$
Note that if $ P $ is a (regular) covering then  
so is $f^*(P)$.

Let $ a \in X_\infty , \; p_\infty(a) = x(0) $ and let $ p:(M,y) \ra (X_\infty ,a) $
be the covering corresponding to a group $ \Gamma \subset \pi _1(X_\infty,a) $,
so that $ p_*(\pi _1(M,y))=\Gamma $.
Let $ p':(M',y') \ra (X_\infty , \tau ^{-1}a) $
be the pull back of $ p $ by $  \tau $. It is a
covering corresponding to the group $ \tau _*^{-1} \Gamma 
\subset \pi _1(X_\infty, \tau ^{-1} a) $.
Then $ \tau :X_\infty \ra X_\infty $ lifts to a homeomorphism 
$\hat{\tau }:M' \ra M $
such that $ p \circ \hat{\tau }=\tau \circ p' $.
$$
\xymatrix {
(M',y') \ar[r]^{\hat{\tau }} 
   \ar[d]_{p'}  &
(M,y) \ar[d]^p \\
(X_\infty , \tau ^{-1}a) \ar[r]^\tau & (X_\infty ,a)
}
$$ 
Let $ \tilde{x} $ be the lift of $ x $ to $ X_\infty $ connecting $ \tau^{-1}a $
to $ a $. If $ \hat{x} $ is the lift of $ \tilde{x} $ to $ M' $ beginning at
$ y' $ and ending at $ y'' $, then   $ p':(M',y'') \ra (X_\infty , a) $ 
is the covering corresponding to the group 
$ \tilde{x} ^{-1} (\tau _*^{-1} \Gamma)\tilde{x} \subset \pi _1(X_\infty,a) $.

Let $ \mathcal{C} $ denote the space of all coverings of $ X_\infty  $ up to
the usual equivalence. Let $ \mathcal{G} $  be the space of conjugacy
classes of subgroups of $  \pi _1(X_\infty,a) \isom K $. There is one-to-one 
correspondance between $ \mathcal{C} $ and $ \mathcal{G} $.
In what follows we will not distinguish notationally between a covering and its 
equivalence class, and between a subgroup and its conjugacy class.

 The pullback transformation 
$ \tau ^*: \mathcal{C} \ra  \mathcal{C} $, 
corresponds to the map $ \tilde{\gamma }: \:
\mathcal{G} \ra \mathcal{G}$, $ \tilde{\gamma }:\:\Gamma \mapsto 
\tilde{x}^{-1}(\tau _*^{-1} \Gamma )\tilde{x} \subset \pi _1(X_\infty,a) $, 
$\forall \: \Gamma \subset \pi _1(X_\infty,a) $, 
which turns into the map $ \gamma $ acting
on the subgroups of $ K \subset \pi _1(X,x(0))$: 
$ \gamma (\Gamma )=x^{-1} \Gamma \:x $, $ \forall \: \Gamma \subset K $.

Regular coverings of $X_\infty $ 
correspond to normal subgroups $ \Gamma \subset K $,
which in turn correspond to representations $ \rho \in \Hom (K,\Sigma ) $
such that $ \ker \rho =\Gamma $, in various groups $ \Sigma $ .
The corresponding map on the space  $  \Hom (K,\Sigma ) $ is $ \sigma _x $,
where $ \sigma _x \rho (\alpha )=\rho (x \alpha x^{-1})$.
Indeed, if $ \Gamma = \ker \rho $, then $ x^{-1}\Gamma x= \ker \sigma _x \rho $.
In summary we can say that {\it the shift $ \sigma _x $ in the space 
$ \Hom (K,\Sigma ) $ defined algebraicly corresponds to the pullback action
of the deck transformation $ \tau $ in the space of regular coverings 
over $ X_\infty $}.

\section{Coverings of finite degree}
\begin{thm}
There is one-to-one correspondence between the surjective elements
$\rho \in \Hom (K, \Sigma) $ such that $ \sigma _x^d \rho  =\rho $
and regular coverings
$ p : N \rightarrow X_d  $ with the group of deck 
transformations 
$ \Sigma $.
\end{thm}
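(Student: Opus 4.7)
The plan is to translate the problem into pure group theory via the identification $\pi_1(X_d)\cong K\rtimes\langle x^d\rangle$, which comes from the splitting $G=K\rtimes\langle x\rangle$ of the knot group provided by the abelianization $G\to G/K\cong\Z$. In this language a regular $\Sigma$-cover $p:N\to X_d$ corresponds to a surjection $\phi:K\rtimes\langle x^d\rangle\to\Sigma$, so I reduce the theorem to matching such $\phi$'s with surjective $\rho\in\Hom(K,\Sigma)$ satisfying $\sigma_x^d\rho=\rho$.

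For the forward direction, given such $\rho$ I extend it to a homomorphism $\phi:\pi_1(X_d)\to\Sigma$ by $\phi(k\cdot x^{dn})=\rho(k)$. The only nontrivial relation to verify is the semidirect-product relation $x^dkx^{-d}=\sigma_x^d(k)$, which in $\Sigma$ (abelian) reduces to $\rho(x^dkx^{-d})=\rho(k)$, i.e., exactly the hypothesis $\sigma_x^d\rho=\rho$. Surjectivity of $\rho$ gives surjectivity of $\phi$, and I take $N$ to be the cover of $X_d$ attached to $\ker\phi$. Geometrically this is the construction hinted at by section 6: form the $\Sigma$-cover $\tilde{N}\to X_\infty$ corresponding to $\ker\rho$; by invariance this cover descends along $X_\infty\to X_d$ as $N=\tilde{N}/\langle\hat{\tau}^d\rangle$, where $\hat{\tau}^d$ is a lift of $\tau^d$ to $\tilde{N}$ (which exists because $\sigma_x^d\rho=\rho$ and commutes with the $\Sigma$-action because $\sigma_x^d$ is trivial on $\Sigma$).

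For the reverse direction, starting with $p:N\to X_d$ I choose a surjective $\phi:\pi_1(X_d)\to\Sigma$ realizing it and set $\rho:=\phi|_K$. The relation $\sigma_x^d\rho=\rho$ then follows from $\Sigma$ abelian: $\rho(x^dkx^{-d})=\phi(x^d)\phi(k)\phi(x^d)^{-1}=\phi(k)=\rho(k)$. To get surjectivity of $\rho$ and thereby the inverse of the forward construction, I normalize $\phi$ so that $\phi(x^d)=0$; this normalization corresponds geometrically to choosing a basepoint in $N$ lying over a fixed basepoint in $X_\infty$, and once it is made the two constructions are manifestly inverse.

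The main obstacle is this normalization: an a priori extension $\phi$ of $\rho$ from $K$ to $\pi_1(X_d)$ is unique only up to an element of $\Sigma$ (the value $\phi(x^d)$), so different extensions might produce non-equivalent unpointed covers of $X_d$. The content of the theorem is therefore to check that, with the correct notion of equivalence of covers (equivalently, with basepoints fixed), this ambiguity vanishes, and that the geometric quotient $\tilde{N}/\langle\hat{\tau}^d\rangle$ is well-defined up to this equivalence. This compatibility ties together the algebraic side (representations of $K$) with the geometric side (covers of $X_d$) and is what makes the classification of depth-2 solvable covers work.
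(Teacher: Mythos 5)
Your forward direction is sound, and it is essentially an algebraic transcription of the paper's geometric construction: the paper builds the cover as $N=M/\zeta$, where $M\to X_\infty$ is the cover attached to $\ker\rho$ and $\zeta$ is a lift of $\tau^d$, and this matches your extension $\phi$ of $\rho$ to $\pi_1(X_d)\cong K\rtimes\langle x^d\rangle$ with $\phi(x^d)=0$. The genuine gap is in the reverse direction, and it is exactly the point you flag and then wave away. First, a surjective $\phi:\pi_1(X_d)\to\Sigma$ need not restrict to a surjective $\rho=\phi|_K$: take $\Sigma=\Z/n$, $\phi|_K=0$, $\phi(x^d)=1$; this is a perfectly good regular $\Sigma$-cover of $X_d$ (namely $X_{dn}\to X_d$) whose associated $\rho$ is trivial. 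Second, your normalization does not repair this: replacing $\phi$ by $\phi'=\phi-\phi(x^d)\cdot\pi$, where $\pi:K\rtimes\langle x^d\rangle\to\Z$ is the projection, is again a homomorphism, but it has a \emph{different kernel} whenever $\phi(x^d)\neq 0$, hence defines a non-equivalent covering of $X_d$ (both kernels are normal, so they are conjugate only if equal, and no choice of basepoint changes this). The ambiguity you describe therefore does not ``vanish'': when $\rho$ is surjective and invariant, the $|\Sigma|$ extensions $\phi$ with the various values of $\phi(x^d)$ have pairwise distinct kernels, so they give $|\Sigma|$ distinct regular $\Sigma$-covers of $X_d$, in addition to the covers with $\phi|_K$ non-surjective. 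Your argument does not close this, so the claimed bijection is not established.

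For comparison, the paper argues geometrically on both sides: forward it quotients the $X_\infty$-cover by a lift $\zeta$ of $\tau^d$, and backward it pulls $N\to X_d$ back along $X_\infty\to X_d$ and reads off $\rho$ as the $\Sigma$-monodromy of loops in $K$. That backward construction produces exactly your $\phi|_K$, and the paper likewise never verifies surjectivity of this $\rho$, nor that the two constructions are mutually inverse (the lift $\zeta$ is unique only up to an element of $\Sigma$, which is the same ambiguity as your $\phi(x^d)$). So you have correctly located the weak point of the statement, but your proposed fix---renormalizing $\phi(x^d)$ to $0$ and appealing to basepoints---is not a fix. To obtain an actual bijection one must either restrict the right-hand side (e.g.\ to covers whose pullback to $X_\infty$ is connected, equipped with a choice of lift of $\tau^d$ or a marking of the fiber) or enlarge the left-hand side accordingly; as written, the reverse direction of your proof does not go through.
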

\begin{proof}   
Let $ \rho $ satisfy the condition of the theorem.
Take a covering $p_\rho : M \ra X_\infty $ corresponding to $ \ker \rho  $.
Since  $ \sigma _x^d \rho  =\rho $, this covering coincides with its
$ d-$time pullback: $ \tau ^{*d} p_\rho =p_{\sigma _x^d \rho } = p_\rho  $.
We can lift $ \tau ^d $ to $ \zeta : M \ra M $ so that the following diagram 
commutes:  
$$
\xymatrix@C=1em{
(M, y') \ar[d]_{p_\rho }
               \ar[rr]^{\zeta } &&
(M, y) \ar[d]^{p_\rho} \\
(X_{\infty }, \tau ^{-d} a) \ar[dr]_{p_{\infty }}
                 \ar[rr]^{\tau ^d} &&
(X_{\infty }, a) \ar[dl]^{p_{\infty }} \\
&( X,x(0)) &
}
$$

If
$ \rho :K \rightarrow \Sigma $ is onto then $ \Sigma \cong K/\ker \rho $ 
acts on $ M $ in the standard way: if $ \alpha \in \pi _1(X_{\infty },a) $ 
is a loop and  $ \tilde{\alpha } $ is its lift to $ M $ 
starting at $ y $, it ends at $ \rho (a)(y) $.
Clearly the action of $ \Sigma  $ commutes with $ \zeta  $.
So $ \Sigma  $ acts on the space  of orbits of $ \zeta ,\;N= M/\zeta $.
These orbits project onto orbits of $ \tau ^d $. 
Since $ X_{\infty }/\tau ^d=X_d $, 
we obtained a regular covering $ p: N \rightarrow X_d $.

   Now we prove that any regular covering over $ X_d $ with the group of deck 
transformations
 $ \Sigma $ can be obtained in this way: namely, for any covering 
(that is convinient to denote by) $ p_2:N \rightarrow X_d $ with $ \Sigma  $
as the group of deck transformations, $ \exists \rho \in \Hom (K, \Sigma ) $
such that $ \sigma _x^d(\rho )=\rho $ and the covering 
$ \eps _2:M \rightarrow X_{\infty } $ corresponding to the subgroup $ \ker \rho $,
such that $ N=M/\zeta  $ , $ \zeta $ being a lift of 
$ \tau ^d $.

\noindent
Consider a diagram 
$$
\xymatrix{
& N \ar[d]^{p_2}  \\
X_{\infty } \ar[r]_{p_1} & X_d
}
$$
where $ p_2 $ is a regular covering with a group of deck transformations
 $ \Sigma $, and $ p_1 $ is an infinite cyclic covering with the generator
$ \tau ^d $.
Let us consider the pullback of $p_2 $ by $p_1$. Let
 $ M \subset N \times X_{\infty },\:
M=\{(a,x)\:|\: p_2a=p_1x \}.$ Then we have two covering maps $ \eps _1 $ and 
$ \eps _2 , \;  \eps _1 (a,x)=a,\; \eps _2(a,x)=x $, 
such that the following diagram commutes: 
$$
\xymatrix{
M \ar[r]^{\eps _1} 
  \ar[d]_{\eps _2} &
N \ar[d]^{p_2}\\
 X_{\infty } \ar[r]_{p_1} & X_d
}
$$
For $ y \in X_{\infty },\: (a_1,y), (a_2,y), \ldots ,(a_s,y) $ are all preimages 
of $ y $ under $ \eps _2 $, where $ a_1, a_2, \ldots ,a_s $ are all preimages of 
$ x=p_1(y) $ under $ p_2 $, and $ (a,y_1), (a,y_2), \ldots , $
are all preimages of $ a \in N $ under $ \eps _1 $, where 
$ y_1, y_2, \ldots $ are all preimages of $ p_2(a) $ under $ p_1 $.
   
Since $ \tau ^d $ is a generator of the group of deck transformations of 
$ p_1 $, $\;\zeta =(\id ,\tau ^d) $ is a generator of the group of deck 
transformations
 of $ \eps _1 $, while $ \{ (\sigma ,\id )|\sigma \in \Sigma \} \cong \Sigma  $
is the group of deck transformations of $ \eps _2 $.

For any $ \beta \in K $ let $ \tilde{\beta } $ be its lift to $ M $ 
starting at 
$ (y_0,\beta (0)) $ and ending at $ (y_1,\beta (0)) $ , 
where $ y_0, y_1 \in N $.
There exists a unique   $  \sigma \in \Sigma $ such that $ \sigma y_0=y_1 $.
Take $ \rho (\beta ) =\sigma $.
It is easy to see that 
$ \beta  \in \ker \rho \text{ iff } x^d(\tau ^d \circ \beta )x^{-d} 
\in \ker \rho $.
So, $  \ker \rho = \ker  \sigma _x^d(\rho ) $.
Since we can think of  $ \rho $ as the homomorphism 
$ \rho :K \rightarrow K/\ker \rho \cong \Sigma $, we have 
$  \sigma _x^d(\rho )= \rho $.
\end{proof}

\section{$p$-adic solenoids}

The above results can be summarized in terms of solenoids fibered over  
manifolds
$X$ and $X_\infty$. 

Let us have a family  of coverings $p_n: S_n\ra B$, $n=0,1,2\dots$,
over the same $m$-dimensional manifold $B$.
We say that they form a {\it tower} if there is a family of coverings 
$g_n: S_n \ra S_{n-1}$ 
such that $p_n= p_{n-1}\circ g_n$. 
In this case we can form the {\it inverse limit}
$\SSS = \underleftarrow{\lim } \, S_n$ by taking
the space of sequences $\bar z= \{z_n\}_{n=0}^\infty ,\;z_n \in S_n $ such that
$g_n(z_n)= z_{n-1}$. Endow $\SSS $ with the weak topology. 
It makes the natural projection $p_\infty: \SSS \ra B$, 
$  \bar z \mapsto z_0$, a locally trivial fibration
with Cantor fibers (as long as $\deg p_n \to \infty$). Moreover, $\SSS$ has 
a ``horizontal'' structure of   
$m$-dimensional lamination. If it is minimal (i.e., if all the leaves are 
dense in 
$\SSS$), 
it is called a {\it solenoid} over $B$. 

If all the coverings $p_n$ are regular with the  group of deck 
transformations $\Si_n$, then 
$\SSS$ is a flat {\it principal $\Si$-bundle} over $B$ with 
$\Si =\underleftarrow{\lim }\, \Si_n$.  This means
that 

\noindent
(i) $p_\infty: \SSS \ra B $ is a locally trivial fibration with fiber $\Sigma $:
$ \forall b \in B ,\; \exists \: U \subset B, \; U \ni b $ and a homeomorphism 
$\phi _U $ such that the following diagram commutes:   
$$
\xymatrix {
p^{-1}(U) \ar[rr]^{\phi _U} 
          \ar[dr]_{p_\infty} &
&  U \times \Si \ar[dl] \\
& U &
}
$$
(ii) If $ U \cap V \neq \emptyset $ and $ h_{U \cap V} $ is defined by commutative
diagram
$$
\xymatrix {
& p^{-1}(U \cap V) \ar[dl]_{\phi _U} 
                   \ar[dr]^{\phi _V}
 & \\
(U \cap V)\times \Si \ar[rr]^{h_{U,V}} & &
(U \cap V)\times \Si
}
$$
then $\exists \:a=a_{U,V} \in \Si, $ such that $ h_{U,V}(b,\si )=(b,\si +a) $. 

\noindent
In this case $ \Si  $ acts on $\SSS $ preserving fibers, 
so that for all $ \alpha \in \Si $
the following diagram commutes: 
$$
\xymatrix {
p^{-1}(U) \ar[d]_{T_\alpha }
          \ar[r]^{\phi _U } &
U \times \Si \ar[d]^{(b,\si ) \mapsto (b,\si +\alpha )} \\
p^{-1}(U) \ar[r]^{\phi _U} & U \times \Si 
}
$$
(we consider the case of an abelian $\Si $).

Given a  principal flat $\Si$-bundle and a point $b\in B$,
 we can consider  the  monodromy action of $K =\pi _1(B,b)$ on the fiber 
$p^{-1}_\infty(b)$. Each element $\gamma \in K $ acts as  a  translation 
by some $\rho (\gamma ) \in \Si $. (Let us cover the immage of $ \gamma $ by   
neighborhoods $ U_0, U_1,\ldots ,  U_n $ from the definition of flat 
principal $\Si $-bundle, such that $ U_i \cap U_{i+1} \neq \emptyset $,
$ U_n= U_0 $. The monodromy action of $\gamma $ on $ p^{-1}(b) \isom \Si $ 
is the translation by $\rho (\gamma )=\sum _{i=0}^{n-1} \alpha _{U_i,U_{i+1}} $).
This action gives us a representation  $\rho :  K \longrightarrow \Si $.  

Vice versa, given a representation $\rho:  K \ra\Si $,
we can construct a flat principal $\Si$-bundle over $B$ by taking the
{\it suspension} of the $K$-action.  
The suspension space $\SSS$ is defined as the quotient
of $\Si\times \tl B$, 
where $\tl B$ is the universal covering of $B$,  by  the diagonal
action of $K$: $(\si ,y) \sim (\si +\rho (\alpha ),\alpha (y)) $ 
$ \forall \si \in \Si ,\; y \in \tilde{B} $ and $\alpha (y) $ 
being the application  of $\alpha \in K \cong \pi _1(B,b)$ to $y$.
Indeed, it is easy to see that if we choose a base point 
$ y \in \pi ^{-1}b \subset \tilde{B} $,
then the elements of $ p_\infty ^{-1}b \subset \SSS $ can be ``enumerated'' 
by elements of $\Sigma $, and that conditions (i) and (ii) in the definition
 of a flat
principal $\Si $-bundle are satisfied. 

Thus, the space $\CC(\Si)$ of principlal flat $\Si$-bundles over $B$
(mod a natural equivalence)
is identified with the space of representations $\rho: K \ra \Si$.

In the case of $B=X_\infty$ and $\Si = \Z _p$, where $\Z _p=\underleftarrow{\lim }
\:\Z /p^r $ is the group of p-adic numbers, the space $\CC(\Z _p) $ of 
flat principal $\Z _p$-bundles (mod natural equivalence) is identified with 
the space of $p$-adic 
representations $\Hom (K,\Z _p)$.
To the bundle
$$
\xymatrix {
\Z _p \ar[r] &
\SSS \ar[d]_{p_\infty}\\
& X_\infty 
}
$$
corresponding to a representation $\rho $, there are associated 
$ \Z /p^r $-bundles 
$$
\xymatrix {
\Z /p^r \ar[r] &
 S_r  \ar[d]_{p_r} \\
& X_\infty 
}
$$
corresponding to homorphisms $\rho _r :K\ra \Z /p^r$, where
$\rho _r$ is the composition 
$$
\xymatrix {
K \ar[r]^\rho & \Z _p \ar[r]^\pi &\Z /p^r
}
,$$
$\pi $ being the natural projection.
Clearly, $S_r $ form a tower of coverings and 
$\SSS= \underleftarrow{\lim }\:S_r$.

Note that $S_r $ is connected iff $\rho _r :K \ra \Z /p^r $ is 
onto. In the case when all $\rho _r $ are onto, $\SSS $ is a solenoid over
$X_\infty $. If for some $r$, $\:\rho _r$ is not onto, $S_r $ 
is disconnected.

The pullback action of the deck transformation $\tau$  on $\CC (\Z _p)$
corresponds to the $\si_x$-action in $\Hom(K, \Z_p)$.   

The latter  space is a finite dimensional $\Z_p$-module. Let us endow it with
the $\sup$-norm. Then any invertible operator 
$A: \Hom(K, \Z_p)\ra \Hom(K, \Z_p)$ becomes an isometry. 
Since $\Hom(K, \Z_p)$ is compact, $A$ is {\it almost periodic} in the
sense that the cyclic operator  group $\{A^n\}_{n\in \Z}$ is precompact.  The
closure of this group is called the {\it Bohr compactification}  of
$A$ (see \cite{Lyu}). Theorem \ref{lcm} provides us with a description
of this group for $\si_x$:

\begin{thm}
    The Bohr compactification of the operator 
$$
    \si_x:  \Hom(K,   \Z_p)\ra \Hom(K, \Z_p)
$$ 
is the inverse limit of the cyclic groups $\Z/d_n$
     where the $d_n$ are the least common multiplies described by Theorem \ref{lcm}.
\end{thm}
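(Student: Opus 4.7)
The plan is to exploit the inverse limit structure $\Hom(K,\Z_p) = \underleftarrow{\lim }_r \Hom(K,\Z/p^r)$, under which $\sigma_x$ is the inverse limit of its induced actions $\sigma_x^{(r)}$ on the finite quotients. By Theorem~\ref{lcm}, each $\sigma_x^{(r)}$ has exact order $d_r$, and by Proposition~\ref{periods} we have the divisibility $d_r \mid d_{r+1}$, making $\{\Z/d_r\}_r$ into an inverse system with the natural surjections as bonding maps. Let $G := \underleftarrow{\lim }_r \Z/d_r$; this is a compact profinite abelian group, which is our candidate for the Bohr compactification.

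The first step is to write down the natural homomorphism $\Phi\colon \Z \to G$, $n \mapsto (n \bmod d_r)_r$, and to observe that $\sigma_x^n$ and $\sigma_x^m$ induce the same operator on $\Hom(K,\Z/p^r)$ exactly when $d_r \mid (n-m)$, i.e.\ precisely when $\Phi(n)$ and $\Phi(m)$ agree in the $r$-th component. The second step is the topological matching: since $\sigma_x$ preserves the $p$-adic filtration and acts as an isometry on the sup-norm (being invertible on a compact $\Z_p$-module), the operator distance $\|\sigma_x^n - \sigma_x^m\|$ is at most $p^{-r}$ precisely when $\sigma_x^n$ and $\sigma_x^m$ agree on $\Hom(K,\Z/p^r)$. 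Hence the operator topology on the cyclic group $\{\sigma_x^n\}_{n \in \Z}$ agrees with the pullback via $\Phi$ of the profinite topology on $G$.

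To conclude, $\Phi(\Z)$ is dense in $G$ because each coordinate projection $G \to \Z/d_r$ is already surjective on $\Z$. Since $G$ is compact, $\Phi$ extends to a continuous homomorphism from the closure of $\{\sigma_x^n\}_{n\in\Z}$ onto $G$, and the topological matching step shows this extension is injective, hence a homeomorphism of compact topological groups, which gives the asserted identification. The main obstacle is the topological matching — concretely, verifying that operator-norm proximity on the inverse limit reduces exactly to level-wise equality. This rests on the fact, implicit in Theorem~\ref{genresult1}, that each $\Hom(K,\Z/p^r)$ is a free $\Z/p^r$-module of a fixed rank $n$, so that reduction modulo $p^r$ is compatible with the filtration and the operator norm on $\text{End}(\Hom(K,\Z_p))$ is genuinely governed by the finite data $\sigma_x^{(r)}$ at each level.
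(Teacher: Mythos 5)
Your argument is correct and is exactly the intended one: the paper states this theorem without proof, merely asserting that it follows from Theorem~\ref{lcm}, and your proposal supplies precisely the missing details (order of $\si_x$ on level $r$ equals $d_r$, divisibility $d_r\mid d_{r+1}$ from Proposition~\ref{periods}, and the identification of operator-norm proximity with level-wise agreement via the freeness of $\Hom(K,\Z/p^r)$ from Theorem~\ref{genresult1}). No gaps; the one point worth making explicit in a write-up is the surjectivity of the reduction maps $\Hom(K,\Z_p)\ra\Hom(K,\Z/p^r)$, which you correctly flag as resting on the free-module structure.
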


We can also consider solvable coverings over the knot complement $X$ described 
in 
\S 7. Taking their inverse limits, we obtain various
solenoids over $X$.

\end{document}